\documentclass[]{llncs}
\usepackage[T1]{fontenc}
\usepackage{amssymb,amsfonts,amstext,amsmath} 
%
%
\usepackage{graphicx}
\usepackage{hyperref}
\hypersetup{
   colorlinks=true,
   urlcolor=blue,    
   linkcolor=black,  
   citecolor=blue,   
}
\begin{document}
\title{On G\"{o}del's  treatment of the undecidable in 1931 \thanks{This paper has been presented at the Round Table \emph{Revisiting G\"{o}del's Incompleteness Theorem} WCP Rome 2024.}}
\titlerunning{Ont G\"{o}del's  treatment of the undecidable in 1931}
\author{Paola Cattabriga}
\authorrunning{Paola Cattabriga}
%
\institute{University of Bologna \\  \medskip
\href{https://orcid.org/0000-0001-5260-2677}{ 0000-0001-5260-2677}}
\maketitle
\begin{abstract}
In this article we discuss the proof  in the short unpublished paper appeared in the 3rd volume of G\"{o}del's Collected Works entitled "On undecidable sentences" (*1931?), which provides an introduction to G\"{o}del's 1931 ideas regarding the incompleteness of arithmetic. We analyze the meaning of the negation of the provability predicate, and how it is meant not to lead to vicious circle. We show how in fact in G\"{o}del's entire argument there is an omission regarding the cases of non-provability, which, once taken into consideration again, allow a completely different view of G\"{o}del's entire argument of incompleteness. Previous results of the author are applied to show that the definition of a contradiction is included in the argument of *1931?. 
Furthermore, an examination of the application of substitution in the well-known G\"{o}del formula  as a violation of uniqueness is also briefly presented, questioning its very derivation.
\end{abstract}
\keywords{ decision problem, provability predicate,  G\"{o}del numbering, effective computability}

\subsection*{Introduction}
There is a short unpublished paper appeared in the 3rd volume of G\"{o}del's  Collected Works  entitled {\it On  undecidable sentences} (*1931?) \cite{godel3}.   Probably as an outline of an informative conference, it has the advantage of essentially exposing the ideas that led G\"{o}del to the incompleteness of arithmetic  \cite{godel1}. Reading it allows us to identify two points that stand out.

The first, at the beginning, where a notion of completeness emerges that explicitly refers to decidability, according to which a system is complete when it is defined with respect to the decision.
 \begin{quote}
... a formal system $S$ is called complete (in the syntactic sense) if every sentence $p$ expressible in the symbols of $S$ is decidable from the axioms of $S$, that is, either $p$ or not-$p$ is derivable in finitely many steps from the axioms by means of the rule of inference of the calculus of logic. \cite{godel3}
 \end{quote}
 A conception that opens the way to the later developments of what today we know as the theory of computability \cite{borger,bgg,davis,davis1,hermes,mendel}. G\"{o}del contribution is in this sense pioneering in providing completely new results. As we will show, for this reason it is currently possible to identify some primary omissions within G\"{o}del's demonstrative strategy, which when properly investigated invalidate the 1931 incompleteness argument in its entirety  \cite{godel1,godel2,godel3,godel4}.
 
 The second point concerns what  G\"{o}del calls a "{\it detour through metamathematics}".   Metamathematical concepts can be expressed as number-theoretic statements, whose expressions demonstrably lie between arithmetic formulas, composed through addition, multiplication, and logical connectives (not, or, and, all, there is), where "all" and "there is " are allowed to refer only to natural numbers.    Although this is a detour through an external path to the arithmetic of natural numbers, we still get an arithmetic class.
Keeping in mind these two different levels, the arithmetic and the metalinguistic one, it is precisely thanks to this deviation that  G\"{o}del brings down from the latter within the extensional level of the arithmetic of natural numbers a class defined by negation, according to typically antinomian methods.

 As is known, for this purpose G\"{o}del will builds step by step  the arithmetization of syntax, according to which a numerical code is effectively assigned not only to the symbols of the language, but also to the finite sequences of symbols and in turn to the finite sequences of these sequences  \cite{godel1,godel2}.
 *1931? offers just a simple introductory example  for the metamathematical relation 
 "The formula $a$ is derivable from the formula $b$ by the rules of inference" as corresponding to the relation $R$ between natural numbers: 
 \begin{quote}
 $R$ holds between the numbers $m$ and $n$ if and only if the formula with the number $m$ is derivable from the formula with number  $n$. \cite{godel3}
 \end{quote}
 Among such metamathematical relations there are sequences representing proofs, which are expressed by the well-known  predicate of provability, $Bew(x)$, that means "$x$ {\it is a provable formula }" \cite{godel1,godel2,godel3}.

 \medskip

 We briefly  present  *1931?'s original argument,  and in next section we will develop these two points. For the formal system $S$ as an extension of arithmetic,  see  \cite{godel1,godel3,godel4}.  As noted by G\"{o}del, in the following proof it is tacitly assumed that any sentence provable in $S$ is true \cite{godel3}.  We maintain this tacit assumption also in the following section, we cannot go  into the examination of the burdensome consequences here.
  
 \bigskip 
 
 \begin{quote}

 G\"{o}del's incipit is as follows.
 
 \medskip
 
  \begin{quotation} {\it 
 Every formal system with finitely many axioms that contains the arithmetic of the natural numbers is incomplete.} \cite{godel3}
  \end{quotation}
  
  \medskip
  
 Demonstration strategy \cite{godel3}.
  
  \medskip
  
 Let us consider the formulas with one free variable that are contained in the formal system $S$ and think of these as ordered in a sequence
 \begin{equation}
 \varphi_1 (x), \varphi_2 (x), \varphi_3 (x), \dots , \varphi_n (x), \dots 
 \end{equation} 
 The class $K$ is then defined as
 \begin{equation}
 n \in K \iff \neg Bew (\varphi_n (n))
 \end{equation}
 and the emphasis is placed again on the crucial point that this class $K$ can be proved to be arithmetic. Then from the hypothesis that $S$ is an extension of arithmetic, it follows that among the formulas in (1) there will be one $\varphi_k (x)$ coextensive with $K$ such that
 \begin{equation}
 \varphi_k (x) \iff x \in K
 \end{equation}
 and therefore such that
  \begin{equation}
 \varphi_k (n) \iff \neg Bew (\varphi_n (n))
 \end{equation}
 for any natural number $n$.  
 In particular hence for $n = k$
   \begin{equation}
 \varphi_k (k) \iff \neg Bew (\varphi_k (k)).
 \end{equation}
 So that, if $\vdash_S \varphi_k (k) $ then also  $\vdash_S \neg Bew(\varphi_k (k)) $. That is, $ \varphi_k (k)$ is provable and not provable in $S$.
 If on the other hand we had 
  $\vdash_S \neg \varphi_k (k) $ then  $\vdash_S  Bew(\varphi_k (k)) $, namely $ \varphi_k (k)$ is provable and not provable in $S$.
 
 Therefore, always under the hypothesis of the correctness of $S$, we would obtain that both $\varphi_k (k)$ and $\neg \varphi_k (k)$ are provable in $S$. Absurd.
 G\"{o}del's conclusion is that $\varphi_k (k)$ is formally undecidable in $S$.
 \end{quote}

\section{As about the detour through metamathematics}
From (1) to (5), the solvability and decidability of formulas are  very involved, there can be no firm understanding if we ignore this. We therefore expose those aspects of the theory of computability which probably could not yet be taken into consideration in G\"{o}del's time.  The following is also explanatory and clarifying  regarding the author's previous works   \cite{catta1,catta2,catta3,catta4}.

 We recall some fundamental definitions.
 
 A predicate $P$ is said to be primitive recursive 
 if and only if its
characteristic function $C_P$ is primitive recursive \cite[179-180]{mendel}. 
If $P$ is a
predicate of $n$ arguments, then the characteristic function $C_P$ is 
defined as follows
\begin{equation}\notag
 C_P(x_1,\dots,x_n) =
\begin{cases}
0& \text{if $P(x_1,\dots,x_n)$ is true,}\\
1& \text{if $P(x_1,\dots,x_n)$ is false.}
\end{cases} 
\end{equation} 

If the characteristic function $C_P$  of a predicate $P$ is primitive recursive, then  $C_P$  is computable, and therefore $P$ is decidable.
Let us then  recall the well-known definitions 45. and 
46. for the provability predicate in G\"{o}del 's 1931  \cite[162-171]{godel1}.

\medskip

\begin{math}  
\text{45. } xBy \equiv Bw(x)\;\&\; [l(x)]\, Gl\, x = y, \\
\text{$x$ is a PROOF of the FORMULA y.}
\end{math}

\medskip

\begin{math}  
\text{46. } \text{Bew}(x) \equiv (Ey) y B x ,\\
\text{$x$ is a PROVABLE FORMULA. }
\end{math}

\medskip

$xBy$ stands for "$x$ is the G\"{o}del number of a proof of the formula with G\"{o}del number y" and is primitive recursive \cite[170-171]{godel1}. Being primitive recursive it is also computable. Let $x$ be the number of a formula $\varphi$ and $y$ the number of $\psi$, then $xBy$ tells us that $\vdash \varphi \rightarrow \psi $ is verifiable in a finite number of steps. So in other words, taking any natural number it can be decomposed to check whether or not it is the G\"{o}del number of a proof of $\psi$ from $\varphi$. And it is possible to establish whether it really is with a set of tests, each of which occurs a finite time.

As is well known, however, the modal property of {\it being provable} represented by  $Bew(x)$ is not verifiable in this way, for  $Bew(x)$  is based on an assertion of existence. $(Ey) y B x$ says nothing about the actual verification procedures, it only symbolizes and affirms their existence.
So, if we are simply given a formula $\varphi$ that is a candidate to become a theorem, then we know nothing about whether there is an effective algorithm for finding how to prove it. We can simply examine the proofs, one at a time, to see if the formula is a theorem. But if it is not a theorem one cannot predict if and when the formula will arise. In terms of computability theory,  $(Ey) y B x$ can generate the list of all the theorems, even infinitely, but there may be no actual procedure to predict whether the candidate formula will appear or not. If it wasn't actually a theorem, it wouldn't appear in the list. 

Reflecting on this path from 45. to 46. we note that what is intended to be only a symbolic notation, the existential unlimited quantifier $(Ey)$, ends up restricting a priori  the field of  what is really computable, that is,  it clearly prevents any further investigations.  We will show how precisely at this point we can instead open a path towards a completely new field for the effective computation.

$\neg \text{Bew}(x)$ is $\neg (Ey) yBx$, i.e. that there is no proof $y$ of $x$. This could be understood, either that the formula with G\"{o}del number $x$ is not  a theorem, or that $(y)\neg yBx$. However, what can always be defined by recursion  ( see \cite{borger,bgg,davis,davis1,hermes,mendel}) allows us to explore the effective in another way.

\medskip

 Let us begin adding to the list of functions (relations) 1-45 in 
G\"{o}del's 1931 two new relations, 45.1 and 46.1, in terms of the 
preceding ones by the procedures given in Theorems I-IV \cite[158-163]{godel1}. 

\medskip

\begin{math}  
\text{45.1. } xWy \equiv Bw(x)\;\&\; [l(x)]\, Gl\, x = \text{Neg}(y),\\
\text{$x$ is a REFUTATION of the FORMULA y.}
\end{math}

\medskip

\begin{math}  
\text{46.1. } \text{Wid}(x) \equiv(Ez) z W x,\\
\text{$x$ is a REFUTABLE FORMULA. }
\end{math}

 \medskip

\noindent
 $\text{Wid}$ 
is the shortening for ``Widerlegung" and must not be 
mistaken with the notion defined by G\"{o}del in note 63 referring instead to 
``Widerspruchsfrei" \cite[192-193]{godel1}.

$ xWy $ like $xBy$ is primitive recursive, so  they are two decidable predicates.  Their characteristic functions are therefore computable. 
We can accordingly trace some logical connections between them as follows.

\medskip

Let us call the characteristic functions of $xBv$ 
 and 
 $xWv$ 
respectively $C_{B}(x,v)$ and
$C_{W}(x,v)$.
As $xBv$ and   
 $xWv$  are  
primitive recursive 
then also $C_{B}(x,v)$ and $C_{W}(x,v)$  are  primitive recursive.
From now on, for any 
expression $X$  we use $\ulcorner X \urcorner$ to denote the 
G\"{o}del number of $X$, and  $PA$ refers to the Peano Arithmetics. For the following lemmas see also \cite{catta1,catta2}.

 \medskip

\begin{lemma}\label{notboth}
For any natural number $n$ and for any formula $\varphi$ in {\it PA} not both
$nW \ulcorner \varphi \urcorner$ 
and $nB \ulcorner \varphi \urcorner$.
\end{lemma}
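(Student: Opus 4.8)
The plan is to argue by contradiction, exploiting the fact that $nB\ulcorner\varphi\urcorner$ and $nW\ulcorner\varphi\urcorner$ share the conjunct $Bw(n)$ and each pins down the \emph{last} formula of the proof coded by $n$, but to two incompatible values. First I would unfold definitions 45 and 45.1 at the arguments $x=n$ and $y=\ulcorner\varphi\urcorner$: the relation $nB\ulcorner\varphi\urcorner$ unfolds to $Bw(n)\;\&\;[l(n)]\,Gl\,n=\ulcorner\varphi\urcorner$, while $nW\ulcorner\varphi\urcorner$ unfolds to $Bw(n)\;\&\;[l(n)]\,Gl\,n=\text{Neg}(\ulcorner\varphi\urcorner)$.

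Next, assuming for contradiction that both relations hold, I would read off their second conjuncts. Since $[l(n)]\,Gl\,n$ denotes one fixed natural number, namely the code of the last term of the single sequence coded by $n$, the two equalities $[l(n)]\,Gl\,n=\ulcorner\varphi\urcorner$ and $[l(n)]\,Gl\,n=\text{Neg}(\ulcorner\varphi\urcorner)$ yield, by transitivity of equality, $\ulcorner\varphi\urcorner=\text{Neg}(\ulcorner\varphi\urcorner)$.

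The final step is to discard this equality. Here I would appeal to the definition of the primitive recursive function $\text{Neg}$, for which $\text{Neg}(\ulcorner\varphi\urcorner)=\ulcorner\neg\varphi\urcorner$: prefixing the negation symbol turns $\varphi$ into the strictly longer, hence distinct, formula $\neg\varphi$, so by injectivity of the G\"{o}del numbering $\ulcorner\varphi\urcorner\neq\ulcorner\neg\varphi\urcorner=\text{Neg}(\ulcorner\varphi\urcorner)$. This contradicts the equality just obtained, and the contradiction shows that $nB\ulcorner\varphi\urcorner$ and $nW\ulcorner\varphi\urcorner$ cannot both hold, as claimed.

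I expect the only delicate point to be this last appeal, namely making explicit that $\text{Neg}$ genuinely alters the G\"{o}del number, equivalently that no formula is its own negation. This is immediate from the construction of G\"{o}del's numbering, but stating it explicitly is precisely what rules out the spurious coincidence $[l(n)]\,Gl\,n=\ulcorner\varphi\urcorner=\text{Neg}(\ulcorner\varphi\urcorner)$ and so closes the argument. Note that the case in which $n$ is not a proof is covered automatically, since the assumption that both relations hold already forces $Bw(n)$; in fact the incompatibility is located entirely in the equality conditions rather than in $Bw(n)$.
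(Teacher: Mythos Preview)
Your argument is correct. Both you and the paper open the same way: assume both $nB\ulcorner\varphi\urcorner$ and $nW\ulcorner\varphi\urcorner$, unfold definitions 45 and 45.1, and display the pair $Bw(n)\,\&\,[l(n)]\,Gl\,n=\ulcorner\varphi\urcorner$ and $Bw(n)\,\&\,[l(n)]\,Gl\,n=\text{Neg}(\ulcorner\varphi\urcorner)$. The divergence is in how the contradiction is extracted. You stay at the purely arithmetical level: $[l(n)]\,Gl\,n$ is a single natural number, so transitivity gives $\ulcorner\varphi\urcorner=\text{Neg}(\ulcorner\varphi\urcorner)$, which is ruled out because $\text{Neg}$ codes the syntactic operation of prefixing a negation and G\"{o}del numbering is injective. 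The paper instead locates the impossibility in the proof system, invoking that no \textsc{axiom} occurs together with its negation and that \textsc{immediate consequence} preserves logical validity, and defers the details to an external reference. Your route is more elementary and self-contained: it needs nothing about the axioms or rules of $PA$, only the definition of $\text{Neg}$ and the injectivity of the coding, whereas the paper's route imports a consistency-flavoured property of the calculus that is not actually required to see that a single sequence cannot have two distinct last terms.
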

\begin{proof}
Let us suppose to have both $nW \ulcorner \varphi \urcorner$ 
and $nB \ulcorner \varphi \urcorner$.
We should have then
$$Bw(n)\;\&\; [l(n)]\, Gl\, n = \ulcorner \varphi \urcorner \quad
\text{and} \quad
Bw(n)\;\&\; [l(n)]\, Gl\, n = \text{Neg}(\ulcorner \varphi \urcorner)$$
which is impossible  because no AXIOM  belongs to the system 
together with its 
negation and  the IMMEDIATE CONSEQUENCE    preserve
logical validity. For a detailed proof see \cite{catta1}. \qed
\end{proof}

\medskip
 
\begin{lemma}\label{complete1}
For any formula $\varphi$, and $n$ as the   G\"{o}del number of a proof
in {\it PA} of $\varphi$
$$ 
\vdash _{PA}\: C_B(n,\ulcorner 
\varphi\urcorner) = 0
 \;\;\&\;\; 
C_W(n,\ulcorner \varphi\urcorner) = 1
$$
\end {lemma}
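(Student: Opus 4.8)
The plan is to first determine the actual numerical values of $C_B(n,\ulcorner\varphi\urcorner)$ and $C_W(n,\ulcorner\varphi\urcorner)$ in the metatheory, using the hypothesis on $n$ together with Lemma~\ref{notboth}, and then to transfer these two equalities into $PA$ by appealing to the numeralwise representability of primitive recursive functions. Conjoining the two derived equations then gives the statement.

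First I would read off the value of $C_B$. By hypothesis $n$ is the G\"{o}del number of a proof in $PA$ of $\varphi$, so by definition 45.\ the relation $nB\ulcorner\varphi\urcorner$ holds, i.e.\ $Bw(n)$ and $[l(n)]\,Gl\,n = \ulcorner\varphi\urcorner$. Since the characteristic function returns $0$ exactly when its predicate is true, this gives $C_B(n,\ulcorner\varphi\urcorner) = 0$. For the value of $C_W$ I would invoke Lemma~\ref{notboth}, which forbids $nB\ulcorner\varphi\urcorner$ and $nW\ulcorner\varphi\urcorner$ from holding simultaneously; as the former has just been established, the latter must fail, so $nW\ulcorner\varphi\urcorner$ is false and hence $C_W(n,\ulcorner\varphi\urcorner) = 1$.

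It remains to lift both equalities into $PA$. Because $xBv$ and $xWv$ are primitive recursive, so are their characteristic functions $C_B$ and $C_W$, and every primitive recursive function is representable in $PA$ in the strong sense that, whenever it returns a definite value on numeral arguments, $PA$ proves the corresponding equation between the relevant numerals. Applying this to the two values just computed yields $\vdash_{PA} C_B(n,\ulcorner\varphi\urcorner) = 0$ and $\vdash_{PA} C_W(n,\ulcorner\varphi\urcorner) = 1$, whence conjunction introduction in $PA$ delivers the result. The step I expect to require the most care is precisely this transfer: one must be certain that numeralwise representability genuinely licenses passing from the metatheoretic truth of the two equalities to their formal derivability in $PA$, and not merely to their truth in the standard model. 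This is the content of the classical representability theorem for primitive recursive functions, the same ingredient that underwrites G\"{o}del's own arithmetization.
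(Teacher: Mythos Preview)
Your proof is correct and follows essentially the same route as the paper: establish $C_B(n,\ulcorner\varphi\urcorner)=0$ directly from the hypothesis that $n$ codes a proof of $\varphi$, then use Lemma~\ref{notboth} to rule out $nW\ulcorner\varphi\urcorner$ and obtain $C_W(n,\ulcorner\varphi\urcorner)=1$. Your explicit appeal to numeralwise representability of primitive recursive functions to lift these equalities from metatheoretic truth to $\vdash_{PA}$ is in fact more careful than the paper's own proof, which simply records that ``the two conjuncts are true'' without spelling out that transfer step.
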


\begin{proof}
As we can notice the two conjuncts are true: since $n$ is the   
G\"{o}del number of a proof
 of $\varphi$,
$C_B(n,\ulcorner 
\varphi\urcorner)= 0$ is true. 
By Lemma (\ref{notboth}) $n W \ulcorner 
\varphi\urcorner$ does not hold,
therefore it is true that
 $n$ is not the   G\"{o}del number of a refutation
 of $\varphi$,  $C_W(n,\ulcorner \varphi\urcorner)= 1$. \qed
\end{proof}

\medskip

\begin{lemma}\label{complete2}
For any formula $\varphi$, and $n$ as the   G\"{o}del number of a 
refutation
in {\it PA} of $\varphi$
$$ 
\vdash _{PA}\: C_W(n,\ulcorner 
\varphi\urcorner) = 0
 \;\;\&\;\; 
C_B(n,\ulcorner \varphi\urcorner)= 1
$$
\end {lemma}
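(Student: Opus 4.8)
The plan is to mirror the argument used for Lemma~\ref{complete1}, simply interchanging the roles of the proof predicate $xBy$ and the refutation predicate $xWy$. The target statement is the exact dual: where Lemma~\ref{complete1} begins with $n$ a proof of $\varphi$ and concludes $C_B = 0 \;\&\; C_W = 1$, here $n$ is a refutation of $\varphi$ and the two equalities to be derived are $C_W(n,\ulcorner\varphi\urcorner)=0$ and $C_B(n,\ulcorner\varphi\urcorner)=1$.

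First I would establish the left conjunct directly from the hypothesis. By assumption $n$ is the G\"{o}del number of a refutation of $\varphi$, so the primitive recursive predicate $nW\ulcorner\varphi\urcorner$ holds by definition~45.1. The convention fixing the characteristic function assigns value $0$ to a true instance of the predicate, whence $C_W(n,\ulcorner\varphi\urcorner)=0$.

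Next I would obtain the right conjunct through Lemma~\ref{notboth}. That lemma forbids both $nW\ulcorner\varphi\urcorner$ and $nB\ulcorner\varphi\urcorner$ from holding simultaneously. Since the former already holds by hypothesis, the latter must fail; thus $n$ is not the G\"{o}del number of a proof of $\varphi$, and by the characteristic function convention $C_B(n,\ulcorner\varphi\urcorner)=1$.

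The one point requiring care, exactly as in the preceding lemma, is the passage from the metatheoretic truth of these two equalities to their formal provability in $PA$ indicated by $\vdash_{PA}$. I expect this to rest on the primitive recursiveness of $xBy$ and $xWy$: since primitive recursive functions are representable in $PA$, a true closed equation formed from their characteristic functions is provable there. This representability is the genuine ingredient, while the combinatorial heart of the lemma is carried entirely by Lemma~\ref{notboth}.
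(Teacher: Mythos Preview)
Your proposal is correct and follows essentially the same route as the paper: derive $C_W=0$ directly from the hypothesis that $n$ is a refutation, then invoke Lemma~\ref{notboth} to exclude $nB\ulcorner\varphi\urcorner$ and obtain $C_B=1$. Your remark about representability in $PA$ as the bridge from truth to $\vdash_{PA}$ is a point the paper leaves tacit, but the structure of the argument is the same.
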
 
\begin{proof}
The two conjuncts are true: since $n$ is the   
G\"{o}del 
number of a refutation
of $\varphi$, 
$C_W(n,\ulcorner 
\varphi\urcorner)= 0$ is true. 
By Lemma (\ref{notboth}) $n B \ulcorner 
\varphi\urcorner$ does not hold,
therefore it is true that
 $n$ is not the   G\"{o}del number of a proof
 of $\varphi$, $C_B(n,\ulcorner \varphi\urcorner)= 1 $. \qed
\end{proof}

\medskip

\begin{lemma}\label{antidiag1}
For  any formula $\varphi$  in {\it PA}
 
\begin{itemize}
	\item[(i)]\it{ not both }  $$\vdash _{PA}\:  n B \ulcorner 
\varphi\urcorner \:\: \vdash _{PA} \: nW \ulcorner \varphi\urcorner,$$
    \item[(ii)] \it{for } $n$  \it{ as the   G\"{o}del number of a refutation
in  PA of } $\varphi$ $$ \vdash _{PA} \: nW \ulcorner \varphi\urcorner \Longleftrightarrow 
\neg  n B \ulcorner \varphi\urcorner ,$$
	\item[(iii)]\it{for } $n$  \it{ as the   G\"{o}del number of a proof 
	in  PA of } $\varphi  $
$$\vdash _{PA} \:   n B \ulcorner \varphi\urcorner 
\Longleftrightarrow \neg nW \ulcorner \varphi\urcorner.$$
\end{itemize}
\end{lemma}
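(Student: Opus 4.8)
The plan is to reduce all three parts to the already-established Lemmas \ref{notboth}, \ref{complete1}, and \ref{complete2}, via the elementary bridge that links the value of a characteristic function to the truth of its predicate. Since $xBy$ and $xWy$ are primitive recursive, hence representable in $PA$, the defining clauses of the characteristic function yield
$$\vdash_{PA} \left( C_B(x,y)=0 \right) \leftrightarrow xBy, \qquad \vdash_{PA} \left( C_B(x,y)=1 \right) \leftrightarrow \neg\, xBy,$$
and symmetrically for $C_W$ and $xWy$. I would record these four equivalences first, as they are precisely what allow Lemmas \ref{complete1} and \ref{complete2}, stated in terms of $C_B$ and $C_W$, to be rephrased directly in terms of $B$ and $W$.

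For part (I), I would argue from the tacit soundness assumption. Suppose, toward a contradiction, that both $\vdash_{PA} nB\ulcorner\varphi\urcorner$ and $\vdash_{PA} nW\ulcorner\varphi\urcorner$. Since every sentence provable in $S$ (and hence in $PA$) is assumed true, both $nB\ulcorner\varphi\urcorner$ and $nW\ulcorner\varphi\urcorner$ would then hold as relations between numbers, contradicting Lemma \ref{notboth}. Hence at most one of the two can be a theorem, which is the claim of (I).

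For part (II), I would specialize to $n$ the G\"{o}del number of a refutation of $\varphi$ and invoke Lemma \ref{complete2}, which delivers $\vdash_{PA} C_W(n,\ulcorner\varphi\urcorner)=0$ and $\vdash_{PA} C_B(n,\ulcorner\varphi\urcorner)=1$. Passing through the bridge equivalences above, these become $\vdash_{PA} nW\ulcorner\varphi\urcorner$ and $\vdash_{PA} \neg\, nB\ulcorner\varphi\urcorner$. Two sentences that are each provable are trivially provably equivalent, so $\vdash_{PA} nW\ulcorner\varphi\urcorner \leftrightarrow \neg\, nB\ulcorner\varphi\urcorner$. Part (III) is entirely symmetric, using Lemma \ref{complete1} in place of Lemma \ref{complete2} to obtain $\vdash_{PA} nB\ulcorner\varphi\urcorner$ and $\vdash_{PA} \neg\, nW\ulcorner\varphi\urcorner$, whence the biconditional of (III).

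The step I expect to require the most care is not any of the three derivations themselves --- each is short once the bridge is in place --- but the \emph{scope of the hypothesis on} $n$ in (II) and (III). The biconditionals are local: they are obtained only because $n$ is assumed to witness a refutation (respectively a proof), which forces both conjuncts of Lemma \ref{complete2} (respectively \ref{complete1}) to be provable simultaneously. For an arbitrary $n$ that is neither a proof nor a refutation number, $\neg\, nB\ulcorner\varphi\urcorner$ would still be provable while $nW\ulcorner\varphi\urcorner$ would be refutable, so the equivalence in (II) would fail. Keeping this conditional reading explicit is essential, since it is exactly this locality that distinguishes the genuinely decidable predicate $B$ (and its companion $W$) from the unbounded $Bew$, and that the later discussion exploits.
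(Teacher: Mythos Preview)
Your proposal is correct and follows essentially the same route as the paper: part (I) is reduced to Lemma~\ref{notboth}, and parts (II)--(III) are read off from Lemmas~\ref{complete2} and~\ref{complete1} via the characteristic-function bridge. The only organisational difference is that the paper casts (II) and (III) as two-direction biconditional arguments (assume one side, invoke the relevant lemma, conclude the other), whereas you observe more directly that under the standing hypothesis on $n$ both sides are already theorems and hence provably equivalent; the content is the same.
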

\begin{proof}
$ $

(i) Immediately by Lemma (\ref{notboth}).

(ii) Let us assume $\vdash _{PA} 
n W \ulcorner \varphi\urcorner$, then
Lemma (\ref{complete2}) yields $\vdash _{PA} 
C_B(n,\ulcorner\varphi\urcorner)= 1$.
Hence by definition  $n B \ulcorner 
\varphi\urcorner$ is false,
consequently $\vdash _{PA}  \neg \: n B \ulcorner\varphi\urcorner$.
Conversely let us assume 
$\vdash _{PA}  \neg \: n B \ulcorner\varphi\urcorner$
then $ n B \ulcorner\varphi\urcorner$ is false 
and by Lemma
(\ref{complete2}) we attain $\vdash _{PA} 
 n W \ulcorner\varphi\urcorner$.

(iii) Let us assume $\vdash _{PA} 
n B \ulcorner \varphi\urcorner$, then
Lemma (\ref{complete1}) yields $\vdash _{PA}  C_W(n,\ulcorner\varphi\urcorner)= 1$.
Hence by definition  $n W \ulcorner\varphi\urcorner$ is false,
consequently $\vdash _{PA}  \neg \: n W \ulcorner\varphi\urcorner$.
Conversely let us assume 
$\vdash _{PA} \neg \: n W \ulcorner\varphi\urcorner$
then $ n W \ulcorner\varphi\urcorner$ is false 
and by Lemma
(\ref{complete1}) we attain 
$\vdash _{PA}   n B \ulcorner\varphi\urcorner$. \qed
\end{proof}

 \bigskip

 Let us now take up G\"{o}del's definition in *1931? according to which $Bew (x)$ means "{\it x is a provable formula}".  What exactly does the negation of  $Bew (x)$ mean from a logical point of view? And what  did it mean  for G\"{o}del himself?  It would appear that $\neg Bew (x)$ means "{\it x is not a provable formula}". We note that in *1931? this meaning is used immediately after (5) (see (4) in \cite{godel3}). So "non-provable" seems to be the correct meaning.
  
 \medskip 
 But if so we would be led to believe that
 
 \medskip
 
\begin{equation} \tag{I}\label{magga}
\text{ if  } \quad \vdash  \neg Bew (\varphi_n (n)) \quad \text{ then } \quad \vdash   \neg \varphi_n (n),
 \end{equation}
so we could directly conclude from (5) in G\"{o}del's  *1931? that
 \begin{equation} \tag{$\mathbf C$}\label{camarra}
\varphi_k (k)  \iff   \neg \varphi_k (k).
 \end{equation}
 
  \medskip
 
This would make it clear that the deviation through metamathematics is in fact just a way to introduce new contradictions by definition. In fact, a vicious circle would be obtained. But considering note 15 in \cite[150-151]{godel1} in G\"{o}del's  whole argument there is not a vicious circle. This is only possible  if  we think  that $ \neg Bew (x)$ simply means that {\it the G\"{o}del number $x$ is not the G\"{o}del number of a provable formula}. 
The formula $\neg Bew (\varphi_k (k))$, in fact, does not speak directly about its provability, but rather about the arithmetic property encoding the syntactic property of  "being a theorem" and its G\"{o}del number.
Therefore, footnote 15 suggests that the argument $\varphi_k (k)$ is not a proposition that asserts by itself  "I am not provable" (in which case there would be circularity, as in (\ref{camarra})), but is rather a formula that states "{\it my G\"{o}del number is not the G\"{o}del number of a provable formula}".
Therefore any presumed circularity is blocked thanks to coding via G\"{o}del-numbering. If this is the correct interpretation of G\"{o}del's thought then (I) is not really included in *1931?'s reasoning from (1) to (5).  

 \medskip
 
We are then leaded to notice that the entire reasoning lacks something that is left neglected and unconsidered.
 Instead of (I) it would be more logical to say that 
 
 \medskip
 
 \begin{equation}\tag{II}\label{urro}
\text{ if } \quad \vdash   Bew (\varphi_n (n)) \quad  \text{ then } \quad \vdash   \varphi_n (n),
 \end{equation}
and, in accordance,
  \begin{equation}\tag{III}\label{terro}
\text{ if }  \quad  \vdash \neg  Bew (\varphi_n (n))  \quad \text{ then }   \quad \nvdash   \varphi_n (n),
 \end{equation}
 
 \medskip
 
 \noindent where the consequent $ \nvdash   \varphi_n (n)$ more adequately expresses  that $ \varphi_n (n)$ is  not a provable formula.
 
 \medskip
 
 We can then ask: is there a way to show (\ref{magga}) and therefore  (\ref{camarra}) as included in *1931?'s argument  from (1) to (5)?
 
 \medskip
 
 The arithmetization of syntax proceeds recursively. We must therefore bring out through arithmetization what has remained neglected, namely how to derive  (\ref{magga}) and  (\ref{camarra}) through the detour of G\"{o}del theoretical numbering.
This is only possible thanks to our definition 45.1, 46.1, which give to  refutability the same recursive status as provability. 
We are thus able to  correctly express and include in the overall argument the other two cases neglected by G\"{o}del's argument.
We have therefore

\medskip

 \begin{equation}\tag{IV}\label{urroneg}
\text{ if }  \vdash   Wid (\varphi_n (n)) \quad \text{ then } \quad \vdash \neg  \varphi_n (n),
 \end{equation}
 and 
  \begin{equation}\tag{V}\label{terroneg}
\text{ if  }  \vdash \neg  Wid (\varphi_n (n)) \quad \text{ then } \quad  \nvdash  \neg  \varphi_n (n).
 \end{equation}

\medskip

Thanks to 45.1, 46.1  and our four previous lemmas we can now reach a correct proof of (\ref{magga}).
  Let us reconsider accordingly the definition (2) of the class $K$ in *1931? \cite{godel3}.
 
 \medskip
 
The following equivalences hold in {\it PA}.
\begin{xalignat}{3} 
  n \in K &\iff  \neg Bew (\varphi_n (n)) && \text{by definition } (2) \\ 
 & \iff\neg Ey \, yB(\varphi_n (n)) && \text{by definition 46.} \\ 
 & \iff (y)\, \neg yB(\varphi_n (n)) && \text{abbreviation \cite[52]{mendel} } \\ 
 & \iff \neg\, mB(\varphi_n (n)) && \text{particularization rule \cite[76]{mendel} }\\ 
  & \iff mW(\varphi_n (n)) && \text{lemma 4 (ii) }\\ 
  & \iff Wid(\varphi_n (n)) && \text{definition 46.1 }\\ 
 & \iff \vdash \neg \varphi_n (n) && \text{(\ref{urroneg})}
   \end{xalignat}

We can now also  state (I), and that the definition of $K$ is equivalent to 
\begin{equation}\label{alba}
n \in K \iff  \vdash \neg \varphi_n (n).
\end{equation}

 The supposition  that among the formulas in the list  (1) there will be one $\varphi_k (x)$ coextensive with $K$ such that
 $
 \varphi_k (x) \iff x \in K
$
yields
\begin{equation}\label{vicious}
 \varphi_k (n) \iff \neg \varphi_n (n) 
 \end{equation}
 and for $n = k $ we obtain  (\ref{camarra})
$$
 \varphi_k (k) \iff \neg \varphi_k (k).
$$

  \medskip
  
 This clearly shows that G\"{o}del's definition (2) is just introducing a contradiction. Apparently this definition is very similar to a creative axiom \cite[153-155]{suppes}, and it echoes the well known words by J. E.  Poincar\'{e}: logistics is no longer sterile, it generates contradictions. 
 
   \medskip

 Regarding the Criterions of the Theory of Definition which states the rules for proper definitions in mathematics \cite[151-173]{suppes}\cite{rogers}, we can add that the assumption $n = k $  in (5) somewhat resembles a  violation of the uniqueness (see also \cite{catta3,catta5,catta6,catta7}). Since the theory is the arithmetic of natural numbers, it is  assumed to include further the identity law $ x = y \rightarrow  ( \phi x \iff \phi y )  $. This law can be written also as  $ \neg ( \phi x \iff \phi y ) \rightarrow   \neg (x = y)$,  and 
 $( \phi x \iff  \neg  \phi y ) \rightarrow  x \neq y$.

So from (\ref{vicious}) and 
\begin{equation}
 ( \varphi_k (n)  \iff  \neg   \varphi_n (n) ) \rightarrow  k \neq n.  
 \end{equation}
we obtain that 
\begin{equation} k \neq n.  \end{equation}
Consequently the assumption that $k = n$ cannot be applied as a substitution to obtain (5), and,  the entire argument of G\"{o}del's *1931? is  meaningless.

 \bigskip
 
 \bigskip

\bibliographystyle{plain}

\begin{thebibliography}{[ACG95a]}
\bibitem{borger}
B\"{o}rger, E. Egon. 
\newblock \emph{Computability, complexity, logic}. 
\newblock 
Elsevier Science Pub. Co., N.Y., 1989.
%
\bibitem{bgg}
B\"{o}rger, E., Gr\"{a}del E., Gurevich Y.
\newblock \emph{The Classical Decision Problem}. 
\newblock 
Springer-Verlag, Berlin, 2001.
%
\bibitem{catta1}
Cattabriga, Paola.
 \newblock Beyond Undecidable. Proceedings IC-AI'2000, Vol. III, CSREA pp. 1475-1481;
  \href{https://arxiv.org/abs/math/0606713}{arXiv:math/0606713}.
  %
\bibitem{catta6}
Cattabriga, Paola.
 \newblock How to release Frege system from Russell's antinomy. Abstract in Bulletin of Symbolic Logic,Vol. 13, N. 2: 269 (2007); 
  \href{https://arxiv.org/abs/0705.0901}{arXiv:math/0705.0901}.
%
\bibitem{catta2}
Cattabriga, Paola.
 \newblock Observations concerning G\"{o}del's 1931.  Abs. in Bulletin of Symbolic Logic, Vol. 13, N. 3,  p. 382 (2007);
  \href{https://arxiv.org/abs/math/0306038}{arXiv:math/0306038}.
%
 \bibitem{catta5}
Cattabriga, Paola.
 \newblock Paradox free. Presented at WIL2021.  
 \href{https://sites.google.com/g.uporto.pt/wil2021/posters?pli=1}{WIL2021}.
%
\bibitem{catta3}
Cattabriga, Paola.
 \newblock Uniqueness Violations.  Logic \& Artificial Intelligence,  \href{https://slai2022.islai.org/proceedings/}{SLAI-2022} Proceedings, Chisinau 2023, pp 134-144.
%
\bibitem{catta4}
Cattabriga, Paola.
 \newblock On Indeterminism. Abstract presented at  \href{https://www.ailalogica.it/incontro2022/}{AILA 2022}.
 %
 \bibitem{catta7}
Cattabriga, Paola.
 \newblock Reflections on Russell's Antinomy,  \href{https://arxiv.org/abs/2409.05903}{arXiv:2409.05903}.
 %
 \bibitem{davis}
Davis, Martin.
\newblock  \emph{Computability and Unsolvability}.  \newblock McGraw-Hill, N.Y., 1958.
 %
 \bibitem{davis1}
Davis, Martin. 
\newblock \emph{The Universal Computer}. 
\newblock 
CRC Press,  2018.
%
\bibitem{godel4}
G\"{o}del, Kurt. 
\newblock Some metamathematical results on completeness and consistency. 1930b. 
\newblock In \emph{Collected Works}, Vol. I  Vol. I Publications 1929-1936.
Oxford University Press, New York, 1995, pp. 140-143.
%
 \bibitem{godel30}
G\"{o}del, Kurt. 
\newblock The completeness of the axioms of the functional calculus of logic.
\newblock In \emph{Collected Works}, Vol. I Publications 
1929-1936. Oxford University Press, New York, 1986, pp. 102-123.
%
\bibitem{godel3}
G\"{o}del, Kurt. 
\newblock On undecidable sentences. *1931?. 
\newblock In \emph{Collected Works}, Vol. III  Unpublished Essays and Lectures.
Oxford University Press, New York, 1995, pp. 30-35.
 %
\bibitem{godel1}
G\"{o}del, Kurt. 
\newblock On formally undecidable proposition of Principia 
mathematica and related systems I. 1931. 
\newblock In \emph{Collected Works}, Vol. I Publications 
1929-1936. Oxford University Press, New York, 1986, pp. 145-195.
%
\bibitem{godel2}
G\"{o}del, Kurt. 
\newblock On undecidable proposition of formal
mathematical systems. 1934. 
\newblock In \emph{Collected Works}, Vol. I Publications 1929-1936.
Oxford University Press, New York, 1986, pp. 346-371.
%
\bibitem{hermes}
Hermes, Hans. 
\newblock \emph{Enumerability Decidability Computability}. 
\newblock 
Springer-Verlag, New York, 1969.
%
\bibitem{mendel}
Mendelson, Elliott.
\newblock  \emph{Introduction to Mathematical Logic} 
\newblock Fourth Edition, Chapman \& Hall,  London, UK, 1997.
\newblock  See also Third Edition, The Wads-worth \& Brooks, Pacific Grove, California, 1987. 
%
\bibitem{rogers}
 Rogers, Robert. 
\newblock  \emph{Mathematical logic and formalized theories}. North Holland,  1971.
%
\bibitem{suppes}
 Suppes, Patrick. 
\newblock  \emph{Introduction to Logic}. Dover Publications, Inc., Mineola, New York, 1957.
%
\end{thebibliography}


%
\end{document}